\newcommand{\EB}{\E B}
\newtheorem{theorem}{Theorem}[section]
\theoremstyle{definition}
\newtheorem{example}[theorem]{Example}
\newtheorem{construction}[theorem]{Construction}
\begin{document}
\title{Multi-algebras as tolerance quotients of algebras}  
\author{G. Gr\"{a}tzer} 
\address{Department of Mathematics\\
  University of Manitoba\\
  Winnipeg, MB R3T 2N2\\
  Canada}
\email[G. Gr\"atzer]{gratzer@me.com}
\urladdr[G. Gr\"atzer]{http://server.maths.umanitoba.ca/homepages/gratzer/}

\author{R. Quackenbush}
\email[R. Quackenbush]{qbush@cc.umanitoba.ca}

\subjclass[2010]{Primary: 06A05.
Secondary: 06B68.}
\keywords{multi-algebra; tolerance relation; clique; covering.}

\begin{abstract}
If $A$ is an algebra and \bgt is a tolerance on $A$, 
then $A/\bgt$ is a multi-algebra in a natural way. 
We give an example to show that not every multi-algebra 
arises in this manner. 
We slightly generalize the construction of $A/\bgt$ 
and prove that every multi-algebra arises from this modified construction.
\end{abstract}

\maketitle

\section{Tolerance quotients} \label{S:I}

Let $M$ be a nonempty set. 
Denote by $\Pow M$ the set of all subsets of $M$ 
and by $\Pow_+ M$ the set of all nonempty subsets of $M$.
A \emph{multi-operation} $f$ on $M$ (of~\emph{arity}~$n$) 
is a function $f \colon M^n \to \Pow_+ M$.
A \emph{multi-algebra} $(M; F)$ is a nonempty set $M$ 
with a set $F$ of multi-operations on $M$.

If $(A; F)$ is an algebra and \bga is a congruence on $A$, 
then the congruence classes of~\bga form an algebra 
$(A/\bga; F) = (A; F)/\bga$. 
A congruence \bga is a reflexive, symmetric, 
and transitive binary relation on $A$ 
with the Substitution Property 
(so $\bga$ is a subalgebra of $(A; F)^2$). 
If we drop the Substitution Property, 
$(A/\bga; F) = (A; F)/\bga$ becomes a multi-algebra. 
The converse was proved in G.~Gr\"atzer \cite{gG62}: 
every multi-algebra can be obtained (up to isomorphism)
in~ this fashion; 
for additional results in this direction, see 
G. Gr\"atzer and G.\,H. Wenzel \cite{GW89}, H. H\"oft and P.\,E. Howard~\cite{HH81}.

What happens if we drop the transitivity of \bgt? 
Define a \emph{tolerance} \bgt on an algebra $(A; F)$ as a reflexive and symmetric binary relation on $A$ with the Substitution Property. 
For an overview of tolerances in algebra, see I. Chajda \cite{iC91}. 

Let $\bgt$ be a binary relation on the set $A$. 
As in graph theory, we call a~subset $B \ci A$ a \emph{clique} 
if $B^2 \ci \bgt$; we call $B$ a \emph{maximal clique} if $B$ is maximal
with~respect to the property  $B^2 \ci \bgt$. By Zorn's Lemma, every clique is contained in a maximal clique. 
A~\emph{covering} of a nonempty set $A$ is a collection 
$\E C$ of~pairwise incomparable subsets of~$A$ whose union is $A$. 
It is easy to see that a reflexive and symmetric 
binary relation $\bgt$ on $A$ is equivalent to a covering, 
$\E C_{\bgt}$, of $A$ where the sets in the covering are 
the maximal cliques of \bgt. 
For~a~tolerance \bgt on an algebra $(A; F)$, 
we call a maximal clique a \emph{tolerance block}. 

Conversely, a covering $\E C$ of $A$ \emph{induces} 
a reflexive and symmetric binary relation~$\bgt$ on $A$ 
defined by $(a, b) \in \bgt$ if{f} there is an $S \in \E C$ 
such that $a, b \in S$. 
It is easy to see that each $S \in \E C$ is a clique of $\bgt$; 
however, $S \in \E C$ need not be a maximal clique of~$\bgt$, 
and even if each $S \in \E C$ is a maximal clique of $\bgt$, 
there may be maximal cliques of~$\bgt$ 
that do not belong to $\E C$.

\begin{example}\label{E:1}
Let $|A| \geq 3$. 

(1) Let $\E C_2$ be the set of all $2$-element subsets of $A$. Then $\E C_2$ induces the full relation $\bgt = A^2$ on $A$, and $A$ is its unique maximal clique.

(2) On $\Pow_+ A$ define the \emph{non-disjointness} relation $\bgn$ by
\begin{equation}\label{E:nondisjoint}
   (X, Y) \in \bgn \text{\q if{f}\q} X \ii Y \neq \es.
\end{equation} 
Then every ultrafilter $U$ on $A$ is a maximal clique of \bgn. But these are not the only maximal cliques of \bgn. Let $\set{a, b, c}$ be a $3$-element subset of $A$; then the set 
\[
   \set{\set{a, b}, \set{a, c}, \set{b, c}}
\]
is a clique of \bgn, but is contained in no ultrafilter of $A$. Hence, there is a maximal clique of \bgn which is not an ultrafilter. Notice that \bgn is induced by the set 
of~all principal ultrafilters of $A$. The non-disjointness relation is well-studied in the combinatorics of set systems, usually under the name \emph{intersecting families} of~sets; see \cite{bB86}.
\end{example}

\begin{construction}\label{C:1}
Given a tolerance $\bgt$ on the algebra $(A; F)$, 
we construct a multi-algebra $(M; F)$. 
Let $\EB_{\bgt}$ be the covering of $A$ 
consisting of all tolerance blocks of $\bgt$. 
For $f \in F$ of arity $n$ and 
$B_1, \dots, B_n \in \EB_{\bgt}$, define $f$ on $\EB_{\bgt}$ by
\[
   f(B_1, \dots, B_n) = 
   \setm{B \in \EB_{\bgt}}{f(b_1, \dots, b_n) \in B 
       \text{ for all } b_i \in B_i \text{ and } 1 \leq i \leq n}.
\]
Since \bgt is a tolerance, if $f(b_1, \dots, b_n) \in B$ 
for some $b_i \in B_i$, then $f(b_1, \dots, b_n) \in B$ 
for all $b_i \in B_i$. 
We denote by $(A; F)/\bgt$
this multi-algebra defined on $\EB_{\bgt}$ 
and call it the \emph{tolerance quotient} 
induced by the tolerance $\bgt$ on the algebra $(A; F)$.
\end{construction}

It is natural to wonder how general this construction is. 
As the next example shows, it is not completely general.

\begin{example}\label{E:2}
Let $M = \set{1, 2, 3}$ and define the  multi-groupoid $(M;+)$  by 
\[
   a + b = \set{a, b}
\]
for all $a, b \in \set{1, 2, 3}$. 
Let us assume that $(M;+)$ is isomorphic 
to the tolerance multi-groupoid induced by the tolerance $\bgt$ 
on the groupoid $(A; +)$. 
Then $\bgt$  has exactly 3 tolerance blocks: $B, C, D$. 
Further, $B + C = \set{B, C}$, $B + D = \set{B, D}$, 
and $C + D = \set{C, D}$. 
This  means that there are $b_1, b_2 \in B$, 
$c_1, c_3 \in C$, and $d_2, d_3 \in D$ 
such that $b_1 + c_1 \in (B \ii C) - D$, 
$b_2 + d_2 \in (B \ii D) - C$, and 
$c_3 + d_3 \in (C \ii D) - B$. 
But then $\set{b_1 + c_1, b_2 + d_2, c_3 + d_3}$ 
is a $3$-element clique in $\bgt$ not contained 
in $B$, $C$, or $D$, so  $\bgt$ has at least 4  cliques. 
This is a contradiction, 
so there is no tolerance multi-groupoid isomorphic 
to the multi-groupoid $(M;+)$.
\end{example}

\section{Full covering quotients}

Let us re-examine the multi-groupoid $(M;+)$ of Example~\ref{E:2}. 
We take $A = \Pow_+ M$ and define \bgn to be the 
non-disjointness relation as in \eqref{E:nondisjoint}. 
Then \bgn has 4 maximal cliques: 
\begin{align*}
S_1 &= \set{\set{1}, \set{1,2}, \set{1,3}, \set{1,2,3}},\\
S_2 &= \set{\set{2}, \set{1,2}, \set{2,3}, \set{1,2,3}},\\ 
S_3 &= \set{\set{3}, \set{1,3}, \set{2,3}, \set{1,2,3}},\\
S   &= \set{\set{1,2}, \set{1,3}, \set{2,3}, \set{1,2,3}}.
\end{align*}
Let $B, C \in A$. If $|B| = |C| = 1$, 
define $B + C$ to be $B \cup C$; 
otherwise, define $B + C$ to be $M = \set{1, 2, 3}$. 
Then \bgn is readily seen to be a tolerance on~$(A; +)$. 
Form $(A; +)/\bgn$. 
We hope that $(M;+)$ is isomorphic to the subalgebra 
of~ $(A; +)/\bgn$ on the subset $\set{S_1, S_2, S_3}$. 
But it is not: $S_1 + S_2 = \set{S_1, S_2, S}$, 
which is not a subset of $\set{S_1, S_2, S_3}$.

So we have to get rid of that troublesome tolerance block $S$. 

\begin{construction}\label{C:2}
Let $\bgt$ be a tolerance  on the algebra $(A; F)$.  
Let $\E C$ be a~covering  of $A$ consisting of some tolerance blocks of $\bgt$.
If for $a, b \in A$, 
we have $(a, b) \in \bgt$ 
if{f} there is some $C \in \E C$ such that $a, b \in C$,
then we call $\E C$ a \emph{full covering}. 
For $f \in F$ of arity $n$ and $C_1, \dots, C_n \in \E C$, define 
\[
   f(C_1, \dots, C_n) = \setm{C \in \E C}{f(c_1, \dots, c_n) \in C \text{ for all } c_i \in C_i \text{ and } 1 \leq i \leq n}.
\]
We denote this multi-algebra by $(A; F)/\E C$ 
and call it the \emph{full covering quotient} of the algebra $(A; F)$
induced by the full covering $\E C$.
\end{construction}

We shall now see that Construction \ref{C:2} is completely general.

\begin{theorem}\label{T:1}
Let $(M; F)$ be a multi-algebra. 
Then there is an algebra $(A; F)$
and a full covering $\E C$ of $(A; F)$ that induces 
a tolerance $\bgt$ on $(A; F)$ 
such that $(M; F)$ is isomorphic to $(A; F)/\E C$.
\end{theorem}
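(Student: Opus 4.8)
The plan is to build the algebra $(A; F)$ directly from the multi-algebra $(M; F)$ so that each element of $M$ becomes a tolerance block, and the blocks overlap exactly enough to encode the multi-valued operations. The guiding idea comes from the motivating example in Section~2: elements of $M$ should correspond to sets in a covering, and $f(m_1,\dots,m_n)=\set{p_1,\dots,p_k}$ should be recorded by placing, for each output value $p_j$, a common ``witness'' point in the blocks corresponding to $m_1,\dots,m_n,p_j$. So my base set $A$ will consist of $M$ itself (one point $\widehat m$ per element $m\in M$, to guarantee each block is nonempty and the blocks are pairwise incomparable) together with one witness point for every pair consisting of an operation instance and an output value it produces.

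Concretely, I would let $A$ be the disjoint union of $M$ and the set of formal symbols $w_{f,m_1,\dots,m_n,p}$, one for each $f\in F$ of arity $n$, each tuple $(m_1,\dots,m_n)\in M^n$, and each $p\in f(m_1,\dots,m_n)$. For $m\in M$ define the block
\[
   C_m=\set{\widehat m}\cup\setm{w_{f,m_1,\dots,m_n,p}}{m\in\set{m_1,\dots,m_n,p}},
\]
and set $\E C=\setm{C_m}{m\in M}$. The tolerance $\bgt$ is the one induced by this covering, i.e.\ $(a,b)\in\bgt$ iff $a,b$ lie in a common $C_m$. Because each $\widehat m$ lies only in $C_m$, the blocks are pairwise incomparable and $\E C$ is a genuine covering; I would also check that each $C_m$ is in fact a \emph{maximal} clique of $\bgt$, which together with the induced-relation definition makes $\E C$ a full covering in the sense of Construction~\ref{C:2}. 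The map $m\mapsto C_m$ is the candidate isomorphism $(M;F)\to(A;F)/\E C$.

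The remaining work is to define the operations on $A$ and verify they behave correctly. The operations $f\in F$ must be honest (single-valued) functions on $A$ making $\bgt$ a tolerance, i.e.\ they must respect the Substitution Property with respect to $\bgt$. The natural choice is to send a tuple of witness/hat points to an appropriate witness point that certifies membership of the intended output block; the delicate point is defining $f$ on \emph{arbitrary} tuples of $A$ (not just on the ``diagonal'' tuples coming from a single block) so that $\bgt$-related inputs go to $\bgt$-related outputs. I expect the main obstacle to be exactly this: arranging the single-valued operations on the enlarged set $A$ so that (i)~for inputs drawn one from each of $C_{m_1},\dots,C_{m_n}$, the value $f(c_1,\dots,c_n)$ lands in $C_p$ precisely when $p\in f_M(m_1,\dots,m_n)$, and (ii)~the Substitution Property holds globally, so that $\bgt$ is a tolerance and the computation of $f(C_{m_1},\dots,C_{m_n})$ in $(A;F)/\E C$ returns exactly $\setm{C_p}{p\in f_M(m_1,\dots,m_n)}$.

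To control this, I would arrange that a witness point $w_{f,m_1,\dots,m_n,p}$ is $\bgt$-related only to hat points and witnesses sharing one of its indices, and define $f$ on a tuple by reading off the indices of its arguments: if the arguments genuinely come from blocks $C_{m_1},\dots,C_{m_n}$ and $p$ is an output of $f_M(m_1,\dots,m_n)$, output the certifying witness $w_{f,m_1,\dots,m_n,p}$ (for the specific $p$ this tuple points at), and otherwise send the tuple to a fixed ``junk'' value that I would fold into $A$ and into a distinguished block so as not to disturb maximality. Checking the Substitution Property then reduces to verifying that $\bgt$-equivalent argument tuples have the same block-index data, which the construction forces by design; once that is in place, equation~\eqref{E:nondisjoint} is not needed and the isomorphism $m\mapsto C_m$ follows from comparing the defining formula for $f$ in Construction~\ref{C:2} with the membership condition built into the witnesses. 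The careful bookkeeping of how $f$ acts on mixed tuples, and the confirmation that no unwanted extra blocks sneak in as maximal cliques, is where essentially all the real content of the proof lies.
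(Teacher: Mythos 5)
Your plan has the right skeleton---private points $\widehat m$ to make each $C_m$ a maximal clique of the induced relation, and shared points to encode the operations---but the witness design fails for a structural reason, precisely at the step you yourself flag as the real content. In Construction~\ref{C:2}, the block $C_p$ belongs to $f(C_{m_1},\dots,C_{m_n})$ only if $f(c_1,\dots,c_n)\in C_p$ for \emph{every} choice of arguments $c_i\in C_{m_i}$; hence every such value must lie in the \emph{intersection} of all intended output blocks. Your witnesses cannot supply such values: each $w_{f,m_1,\dots,m_n,p}$ lies in only one output block $C_p$ (besides the input blocks $C_{m_i}$, where placing it has the further unwanted effect of certifying input blocks as outputs). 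Concretely, take $M=\set{1,2,3}$ with one unary operation $g$, where $g(1)=\set{2,3}$ and $g(2)=g(3)=\set{1}$. Then your $A$ consists of $\widehat 1,\widehat 2,\widehat 3$ and the witnesses $w_{g,1,2},w_{g,1,3},w_{g,2,1},w_{g,3,1}$, and your blocks come out as $C_2=\set{\widehat 2,w_{g,1,2},w_{g,2,1}}$ and $C_3=\set{\widehat 3,w_{g,1,3},w_{g,3,1}}$, so that $C_2\ii C_3=\es$. Under your candidate isomorphism $m\mapsto C_m$ you need $g(C_1)=\set{C_2,C_3}$, which would force every value $g(x)$, $x\in C_1$, into $C_2\ii C_3=\es$. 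This is impossible no matter how the single-valued operation is defined on $A$, so the defect is in the construction of $A$ and $\E C$ itself, not in the bookkeeping for $f$. The junk element makes matters worse: if it sits in a distinguished block of its own, it is $\bgt$-unrelated to every witness, so sending a mixed tuple to junk while a $\bgt$-related all-hat tuple goes to a witness violates the Substitution Property, and $\bgt$ is not even a tolerance.

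Two ideas repair this, and together they essentially reproduce the paper's proof. First, each operation instance $(f,m_1,\dots,m_n)$ needs \emph{one} witness lying in \emph{exactly} the blocks $C_p$ with $p\in f(m_1,\dots,m_n)$ and in no other block, and only the all-hat tuple $(\widehat{m_1},\dots,\widehat{m_n})$ should be sent to it; this single tuple then certifies at once that every output block gets in and every other block stays out. Second, you need a \emph{universal} point lying in every block, to which all remaining tuples are sent; it never cuts down an output set, and it rescues the Substitution Property (any two related tuples that are not both all-hat have at least one value equal to the universal point, which is related to everything, while related all-hat tuples are equal). The paper manufactures all of this in one stroke by taking $A=\Pow_+ M$ with $\E C$ the principal filters $M_m$: the private point of $M_m$ is $\set{m}$, the universal point is $M$, the witness for $(f,m_1,\dots,m_n)$ is the element $f(m_1,\dots,m_n)$ of $\Pow_+ M$, and the operations send tuples of singletons to that witness and everything else to $M$. (A side remark: your final worry about extra maximal cliques sneaking in is moot---Construction~\ref{C:2} explicitly allows $\E C$ to omit maximal cliques of $\bgt$; you only need each $C_m$ to be maximal, which your hat points already guarantee.)
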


\begin{proof}
Take $A = \Pow_+ M$. For $n$-ary $f \in F$ and $m_1, \dots, m_n \in M$, 
define $f$ on $\Pow_+ M$ by:
\[
   f(\set{m_1}, \dots, \set{m_n})= \set{f(m_1, \dots, m_n)},
\]
and, otherwise, 
\[
   f(B_1, \dots, B_n) = M.
\]
For $m \in M$, let $M_m = \setm{S \ci M}{m \in S}$; 
then $\E C = \setm{M_m}{m \in M}$ is a~covering of $A$ by the principal filters of $M$. 
As noted in Example \ref{E:1} (2), $\E C$ induces \bgn on $A$. 
Each $M_m$ is a maximal clique of $\bgn$.

To prove that $\bgn$ is a tolerance, 
let $f \in F$ be $n$-ary and $(A_i, B_i) \in \bgn$ 
for $1 \leq i \leq n$; we need to conclude that 
$(f(A_1, \dots, A_n), f(B_1, \dots, B_n)) \in \bgn$. 
Note that $(M, N) \in \bgn$ 
for all nonempty $N \ci M$; 
thus, our conclusion holds unless each $A_i$ 
and each $B_i$ is a $1$-element subset of $M$. 
But for $1$-element subsets $A, B$, 
we have $(A, B) \in \bgn$ if{f} $A = B$.  
Because $\bgn$ is reflexive, 
our conclusion also holds in this case since then $A_i = B_i$ 
for all $i$. So $\bgn$ is indeed a tolerance. Hence, $\E C$ is a full covering.

Now use Construction \ref{C:2} to form $(A; F)/\E C$.  
It is obvious from the definitions 
that $(A; F)/\E C$ is isomorphic 
to $(M; F)$ via the map that sends $M_m$ to~$m$.
\end{proof}

\end{document}